\newcommand{\bdis}{\begin{displaymath}}
\newcommand{\edis}{\end{displaymath}}
\newcommand{\be}{\begin{equation}}
\newcommand{\ee}{\end{equation}}
\newcommand{\mbb}{\mathbb}
\newcommand{\mcal}{\mathcal}
\newcommand{\vp}{\varphi}
\newcommand{\tT}{\tilde{T}}
\newcommand{\tU}{\tilde{U}}
\newcommand{\btau}{\bar{\tau}}
\newcommand{\vth}{\vartheta}
\newtheorem{theorem}{Theorem}
\newtheorem{lemma}[]{Lemma}
\theoremstyle{definition}
\theoremstyle{remark}
\newtheorem{remark}[]{Remark}
\newtheorem*{mydef2}{{\bf Definition}}
\numberwithin{equation}{section}
\begin{document}

\title{Jacob's ladders and the asymptotically approximate solutions of a nonlinear diophantine equation}

\author{Jan Moser}

\address{Department of Mathematical Analysis and Numerical Mathematics, Comenius University, Mlynska Dolina M105, 842 48 Bratislava, SLOVAKIA}

\email{jan.mozer@fmph.uniba.sk}

\keywords{Riemann zeta-function}

\begin{abstract}
The nonlinear equation which is connected with the main term of the Hardy-Littlewood formula  for $\zeta^2(1/2+it)$ is studied. In this direction
I obtain the fine results which cannot be reached by published methods of Balasubramanian, Heath-Brown and Ivic in the field of the
Hardy-Littlewood integral.
\end{abstract}

\maketitle

\section{Formulation of the results}

\subsection{}

Let

\be \label{1.1}
S(T,U)=2\sum_{n<P}\frac{d(n)}{\sqrt{n}}\frac{\sin\left(\frac U2\ln\frac Pn\right)}{\frac{U}{2}\ln\frac{P}{n}}
\cos\left\{\left( 2\pi P+\frac U2\right)\ln\frac Pn-2\pi P-\frac \pi 4\right\} ,
\ee
where $P=T/2\pi$ and $d(n)$ is the number of divisors of $n$. In this paper I consider the nonlinear diophantine equation
\be \label{1.2}
\tau=\frac{S(T,U)}{\ln T},\ \tau\in [\eta,1-\eta]\bigcup \{ 1\}
\ee
in two variables $T,U$ with the parameter $\tau$ where
\be \label{1.3}
T\in [T_0,T_0+U_0],\ U\in \left( \left. 0,T_0^{1/6-\epsilon/2}\right.\right],\ U_0=T_0^{1/3+2\epsilon} ,
\ee
and $0<\eta$ is a sufficiently small fixed number  and $0<T_0$ is a sufficiently big fixed number.

\begin{mydef2}

Let for $\bar{\tau}\in [\eta,1-\eta]\cup\{ 1\}$ there be a sequence $\{ T_0(\bar{\tau})\},\ T_0\to\infty$ and the values
$\tT=\tT(T_0,\btau),\ \tU=\tU(T_0,\btau)$ for which
\begin{eqnarray} \label{1.4}
& &
\tT\in [T_0,T_0+1.1U_0],\ \tU\in\left(\left. 0,T_0^{1/6-\epsilon/2}\right.\right] , \\
& &
\btau\sim \frac{S(\tT,\tU)}{\ln\tT},\ T_0\to\infty \nonumber
\end{eqnarray}
is fulfilled. Then the pair $[\tT,\tU]$ is called the \emph{asymptotically approximate solution} (AAS) of the equation (1.2) for $\tau=\btau$.

\end{mydef2}

\subsection{}

The method of parallel and rotating chords (see \cite{4}-\cite{6}) leads to the proof of the following theorems.

\begin{theorem}
For $\tau=1$ there is the continuum AAS of the equation
\bdis
1=\frac{S(T,U)}{\ln T} .
\edis
The structure of the set of these solutions is such as follows: to each sufficiently big $T_0$ continuum of AAS corresponds.
\end{theorem}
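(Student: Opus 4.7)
I would apply the method of parallel chords. Fix $\btau = 1$ and let $T_0$ be sufficiently large. For each admissible $U \in (0, T_0^{1/6-\epsilon/2}]$ introduce the continuous function
\bdis
F_U(T) := \frac{S(T,U)}{\ln T}, \qquad T \in [T_0, T_0 + 1.1\, U_0],
\edis
and try to hit the level $F_U = 1$ via the intermediate value theorem. Once this is in place, the assignment $U \mapsto (\tT(U), U)$ produces a one-parameter family of pairs solving $F_U(\tT(U)) = 1$, so in particular $\btau = 1 \sim S(\tT(U),U)/\ln \tT(U)$ as $T_0\to\infty$; varying $U$ over the continuum $(0, T_0^{1/6-\epsilon/2}]$ gives the continuum of AAS attached to the prescribed $T_0$, which is the structural assertion of the theorem.

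\textbf{Key oscillation step.} To apply the intermediate value theorem I must exhibit $T_1, T_2 \in [T_0, T_0 + 1.1\, U_0]$ with $F_U(T_1) > 1 > F_U(T_2)$. The leading terms (the small $n$ in (\ref{1.1})) each carry a cosine of the form
\bdis
\cos\!\Bigl\{\Bigl(2\pi P + \frac{U}{2}\Bigr)\ln\frac{P}{n} - 2\pi P - \frac{\pi}{4}\Bigr\},
\edis
whose phase, as a function of $T = 2\pi P$, has derivative of order $\ln T$. Over a window of length $1.1\, U_0 = 1.1\, T_0^{1/3+2\epsilon}$ the phase therefore runs through a huge number of periods, while the damping factor $\sin(x)/x$ with $x = (U/2)\ln(P/n)$ stays close to $1$ for those small $n$ under the constraint $U \le T_0^{1/6-\epsilon/2}$. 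Combining this with a divisor-type estimate for the tail (using $\sum_{n<P} d(n)/\sqrt{n} \ll \sqrt{P}\ln P$ together with the $\sin(x)/x$ decay for $n$ away from the small range) should show that the tail cannot mask the leading oscillation, yielding the desired sign change of $F_U - 1$.

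\textbf{Main obstacle.} The bulk of the proof lies exactly in that oscillation step: one must show, quantitatively and uniformly in the allowed $U$, that the amplitude of the oscillatory part of $S(T,U)$ strictly exceeds $\ln T$ on the window $[T_0, T_0 + 1.1\, U_0]$. The parameter choices in (\ref{1.3}) are tuned precisely for this balance: $U \le T_0^{1/6-\epsilon/2}$ keeps the $\sin(x)/x$ factor active for the leading terms, and $U_0 = T_0^{1/3+2\epsilon}$ gives a window wide enough to realise many oscillation periods but short enough that the asymptotic analysis of $S(T,U)$ remains uniform across the window. Once this amplitude lower bound is secured, continuity and the intermediate value theorem finish the argument with essentially no further work, and the continuum of AAS follows automatically from the free one-dimensional parameter $U$.
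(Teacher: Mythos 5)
You are attempting to prove something stronger than the theorem asks, and the step on which your whole plan rests is precisely the one you do not supply. The decisive gap is your ``key oscillation step'': to run the intermediate value theorem you need $F_U(T)-1$ to change sign on $[T_0,T_0+1.1\,U_0]$, uniformly in the admissible $U$, i.e. a lower bound of order $\ln T$ for the oscillation of $S(T,U)$ about the level $\ln T$. Nothing in your sketch delivers this. The estimate $\sum_{n<P}d(n)/\sqrt{n}\ll\sqrt{P}\ln P$ is of size $\sqrt{T}\ln T$, enormously larger than the target $\ln T$, and the damping factor $\sin x/x$ with $x=(U/2)\ln(P/n)$ is close to $1$ exactly for the many terms with $n$ near $P$ (it only decays once $\ln(P/n)\gg 1/U$), so an absolute-value treatment of the tail cannot succeed: one needs cancellation over $n$, which is essentially the content of the integrated Hardy--Littlewood formula, not of a divisor bound. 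Showing that $S(T,U)$ genuinely crosses the level $\ln T$ in every window of length about $T^{1/3+2\epsilon}$, for every $U\le T^{1/6-\epsilon/2}$, is an $\Omega_\pm$-type statement about the second moment of $\zeta(1/2+it)$ in short intervals; it is the entire difficulty, it is asserted (``should show'') rather than proved, and it is not what the paper establishes.

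The paper's argument is much softer because the theorem only requires asymptotically approximate solutions: by the Definition it suffices to exhibit pairs with $1\sim S(\tT,\tU)/\ln\tT$ as $T_0\to\infty$, not exact roots of $F_U=1$. The proof imports from \cite{6} (Corollary 2, Remark 4, the parallel-chords construction) that for every sufficiently large $T_0$ there is already a continuum of pairs $[\tT,\tU]$ with $\tT\in[T_0,T_0+U_0]$, $\tU\le T^{1/6-\epsilon/2}$ and $\frac{1}{\tU}\int_{\tT}^{\tT+\tU}Z^2(t)\,{\rm d}t=\ln\tT\left\{1+\mathcal{O}\left(\frac{\ln\ln\tT}{\ln\tT}\right)\right\}$, and then converts the integral into $S(\tT,\tU)$ by Lemma 2 (the Hardy--Littlewood formula with Motohashi's error term, integrated via Lemma 1), obtaining $1+\mathcal{O}\left(\frac{\ln\ln\tT}{\ln\tT}\right)=\frac{S(\tT,\tU)}{\ln\tT}+\mathcal{O}\left(\frac{1}{\tT^{1/6}\ln\tT}\right)$; that is the whole proof. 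Your proposal never invokes Lemma 2 (the bridge between $S(T,U)$ and $\int Z^2$) nor the prior continuum result, and without them the direct analysis of the exponential sum $S(T,U)$ you outline has no support. If you replace the intermediate value argument by the combination of the continuum of pairs satisfying (3.2) with Lemma 2, the theorem follows in a few lines.
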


\begin{theorem}
Let ${\gamma}$ denote the sequence of the zeroes of $\zeta(1/2+it)$. Then for each $\tau\in [\eta,1-\eta]$ there is a continuum of the AAS of the
equation
\bdis
\tau=\frac{S(T,U)}{\ln T} .
\edis
The structure of the set of these solutions is such as follows: to each sufficiently big $\gamma$ continuum of AAS corresponds.
\end{theorem}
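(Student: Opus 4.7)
The plan is to apply the method of parallel and rotating chords developed in \cite{4}--\cite{6}, reading $S(T,U)$ through its connection with the Hardy--Littlewood short-interval formula for $\zeta^2(1/2+it)$. Up to a controlled error, the quantity $\tau(T,U):=S(T,U)/\ln T$ encodes the normalized local second moment of $\zeta$ on an interval of length $\sim U$ near $T$; in particular its size is driven by the proximity of $T$ to the zero set $\{\gamma\}$ of $\zeta(1/2+it)$, which explains why Theorem~2, unlike Theorem~1, indexes its continua by the zeros rather than by an arbitrary $T_0$.

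First I would analyze the two extreme regimes. When $\tT$ is taken close to a fixed zero $\gamma$ and $\tU$ correspondingly small, the local mean of $|\zeta(1/2+it)|^2$ near $\gamma$ is small, so the Hardy--Littlewood comparison forces $\tau(\tT,\tU)$ close to $0$. When $\tU$ is of the largest admissible order $T_0^{1/6-\epsilon/2}$, the interval is long enough for the average to stabilise at its Hardy--Littlewood main value and $\tau(\tT,\tU)$ approaches $1$. The parallel-chord argument then exploits the joint continuity of $\tau$: the chord joining a point in the $\tau\approx 0$ regime to one in the $\tau\approx 1$ regime must, by the intermediate value property, cross every level $\btau\in[\eta,1-\eta]$, producing at least one pair $(\tT,\tU)$ in the admissible window \eqref{1.4} with $\btau\sim S(\tT,\tU)/\ln\tT$.

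Next, to upgrade existence to a continuum attached to each sufficiently large $\gamma$, I would apply the rotating-chord variant: small translations of the endpoint $\tT$ within the enlarged window of length $1.1\,U_0$ (the slack between $U_0$ and $1.1\,U_0$ in \eqref{1.4} is exactly what the rotation requires) can be compensated continuously by adjusting $\tU$, producing a one-parameter family of solutions parametrised by the position of $\tT$ in a neighborhood of $\gamma$, and hence a continuum.

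The main obstacle will be the uniform control of the error in the Hardy--Littlewood formula on short intervals of length $\tU\le T_0^{1/6-\epsilon/2}$, as $\tT$ sweeps a neighborhood of $\gamma$ of length $\sim U_0=T_0^{1/3+2\epsilon}$. The gap between the exponents $1/6-\epsilon/2$ and $1/3+2\epsilon$ is precisely what keeps the error negligible against $\btau\ln\tT$ along the whole chord; once that estimate is in hand, the continuity, intermediate value, and one-parameter deformation arguments are routine. This uniform short-interval control is the same technical bottleneck governing the Balasubramanian--Heath-Brown--Ivic treatment of the Hardy--Littlewood integral, and sharpening it is the novelty foreshadowed by the abstract.
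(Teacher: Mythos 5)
Your argument has a genuine gap at its upper anchor point. The intermediate-value scheme needs two endpoints: $\tau\approx 0$ (fine: $Z^2$ is continuous and vanishes at $\gamma$, so the average over $[\gamma,\gamma+\tU]$ with $\tU\to 0^+$ is small), and $\tau\approx 1$ at the top of the admissible range $\tU\asymp T_0^{1/6-\epsilon/2}$. The second claim --- that for such $\tU$ the interval ``is long enough for the average to stabilise at its Hardy--Littlewood main value'' --- is precisely what no published method gives, and it is not a matter of ``uniform control of the error in the Hardy--Littlewood formula'': the error handled in Lemma 2 is already negligible ($\mcal{O}(T^{-1/6})$ against $\ln T$); the problem is the \emph{main term}. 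Unconditionally, the asymptotic $\int_T^{T+H}Z^2(t){\rm d}t\sim H\ln T$ for every $T$ is known only for $H$ of order about $T^{1/3+\epsilon}$ (Balasubramanian's error term in (4.1)), and Ivic's bound (4.5) for shorter $G$ is only an upper bound $\mcal{O}(G\ln^2T)$. So at $\tU=T_0^{1/6-\epsilon/2}$ your path has no provable point with $\tau$ near $1$, the IVT cannot be run inside the window (1.3)--(1.4), and framing the missing estimate as ``the same technical bottleneck governing the Balasubramanian--Heath-Brown--Ivic treatment'' inverts the paper's point: those methods demonstrably do not reach intervals this short, which is why a different input is needed.

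The paper supplies that input not by evaluating the mean value for arbitrary $(\tT,\tU)$, but by quoting the rotating-chord formula (3.4) from the Jacob's-ladder papers (the dual of the tangent law, via the multiplicative formula (4.10) in which the normalized mean equals $\tan[\alpha(T,U)]\{1+o(1)\}$): for each fixed chord direction, i.e.\ each fixed $\tau=\tan[\alpha]\in[\eta,1-\eta]$, there is already a continuum of parallel chords anchored near each large $\gamma$, producing pairs with $\gamma<\tT<\gamma+1.1U_0(\gamma)$ and $\frac{1}{\tU}\int_{\tT}^{\tT+\tU}Z^2(t){\rm d}t=\tau\ln\tT\{1+\mcal{O}(\ln\ln\tT/\ln\tT)\}$; one then restricts to the continual subset with $\tU<\gamma^{1/6-\epsilon/2}$ and applies Lemma 2 to replace the integral by $S(\tT,\tU)$, giving (3.8). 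Thus both the attainment of the level $\tau$ and the continuum structure are inherited from the chord geometry of $\vp$, with $\gamma$ entering as the anchor of the rotating chord --- not from proximity of $T$ to the zero set making the mean small, and not from an intermediate-value argument on the raw average. If you want a self-contained proof along your lines, you would have to first establish something equivalent to (3.4)/(4.10), which is exactly the content borrowed from \cite{5}, \cite{6}.
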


\begin{remark}

It is clear that these results cannot be reached by published methods of Balasubramanian, Heath-Brown and Ivic in the field of the
Hardy-Littlewood integral.

\end{remark}

This paper is a continuation of the series of works \cite{4}-\cite{11}.

\section{Lemmas}

\subsection{}

\begin{lemma}
\begin{eqnarray}  \label{2.1}
& &
\int_T^{T+U}\cos\{ 2\vth(t)-t\ln n\}{\rm d}t= \\
& &
=U\frac{\sin\left(\frac U2\ln\frac Pn\right)}{\frac U2\ln\frac Pn}
\cos\left\{\left( 2\pi P+\frac U2\right)\ln \frac Pn-2\pi P-\frac \pi 4\right\}+ \nonumber \\
& &
+\mcal{O}\left(\frac{U+U^3}{T}\right),\
P=\frac{T}{2\pi} , \nonumber
\end{eqnarray}
for $U>0$, where
\bdis
\vth(t)=-\frac{1}{2}t\ln\pi +\text{Im}\left\{ \ln\Gamma\left(\frac 14+\frac 12it\right)\right\} .
\edis
\end{lemma}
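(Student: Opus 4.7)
The plan is to write the integrand as $\cos\phi(t)$ with phase $\phi(t) := 2\vth(t) - t\ln n$, Taylor-expand $\phi$ about $t = T$, integrate the leading linear-phase approximation exactly, and absorb the remaining curvature into the error. Starting from the classical asymptotic expansion
$$2\vth(t) = t\ln\frac{t}{2\pi} - t - \frac{\pi}{4} + \mcal{O}(1/t), \qquad t \to \infty,$$
setting $t = T+u$ with $u \in [0,U]$, and expanding $(T+u)\ln\frac{T+u}{2\pi n} - (T+u)$ in powers of $u/T$, I obtain
$$\phi(T+u) = A + Bu + \frac{u^2}{2T} + \mcal{O}\!\left(\frac{u^3}{T^2}\right) + \mcal{O}\!\left(\frac{1}{T}\right),$$
with, on using $P = T/(2\pi)$ so that $T\ln\frac{T}{2\pi n} = 2\pi P\ln\frac{P}{n}$ and $-T = -2\pi P$,
$$A = 2\pi P\ln\frac{P}{n} - 2\pi P - \frac{\pi}{4}, \qquad B = \ln\frac{P}{n}.$$

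I would then split $\cos\phi(T+u) = \cos(A+Bu) + [\cos\phi(T+u) - \cos(A+Bu)]$ and compute the leading contribution by the elementary identity
$$\int_0^U\cos(A+Bu)\,du = U\,\frac{\sin(BU/2)}{BU/2}\cos\!\left(A + \frac{BU}{2}\right).$$
A direct substitution gives $A + BU/2 = (2\pi P + U/2)\ln\frac{P}{n} - 2\pi P - \frac{\pi}{4}$, which is precisely the phase appearing in the stated main term.

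For the remainder, the Lipschitz bound $|\cos\phi - \cos(A+Bu)| \le |\phi(T+u) - (A+Bu)|$ and term-by-term integration yield
$$\mcal{O}\!\left(\int_0^U\!\left(\frac{u^2}{T} + \frac{u^3}{T^2} + \frac{1}{T}\right)du\right) = \mcal{O}\!\left(\frac{U^3}{T} + \frac{U^4}{T^2} + \frac{U}{T}\right) = \mcal{O}\!\left(\frac{U+U^3}{T}\right),$$
since $U^4/T^2 \ll U^3/T$ for $U\le T$. The main technical burden is the careful Taylor expansion of $\vth$ and the uniform book-keeping of the error terms, ensuring that the implicit constants do not depend on $n$; once that is set, the integration of the linear-phase cosine is routine.
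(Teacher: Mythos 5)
Your proof is correct and follows essentially the same route as the paper: expand the phase $2\vth(T+u)-(T+u)\ln n$ about $t=T$ using the Stirling-type asymptotics for $\vth$, integrate the resulting linear-phase cosine exactly to produce the $\sin/\,$(argument) factor with phase $A+BU/2$, and absorb the curvature of the phase into $\mcal{O}\left((U+U^3)/T\right)$. Your explicit tracking of the $u^2/(2T)$ term before integrating is, if anything, slightly more careful bookkeeping than the paper's uniform phase-error bound, but the argument is the same.
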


\begin{proof}
Following the formulae (see \cite{7}, pp. 221, 329)
\begin{eqnarray} \label{2.2}
& &
\vth(t)=\frac{1}{2}t\ln\frac{t}{2\pi}-\frac{1}{2}t-\frac 18\pi+\mcal{O}\left(\frac{1}{t}\right) , \nonumber \\
& & \\
& &
\vth^\prime(t)=\frac{1}{2}\ln\frac{t}{2\pi}+\mcal{O}\left(\frac{1}{t}\right) \nonumber
\end{eqnarray}
we have ($t=T+x,\ x\in [0,U]$)
\begin{eqnarray}
& &
2\vth(T+x)-(T+x)\ln n= \\
& &
x\ln\frac Pn +2\pi P\ln \frac Pn-2\pi P-\frac \pi 4+\mcal{O}\left(\frac{1+U+U^3}{T}\right) . \nonumber
\end{eqnarray}
Then we obtain (2.1) by (2.3).
\end{proof}

\subsection{}

\begin{lemma}
\be \label{2.4}
\frac 1U\int_U^{T+U} Z^2(t){\rm d}t=S(T,U)+\mcal{O}\left(\frac{1}{T^{1/6}}\right) ,
\ee
for $U\leq T^{1/6-\epsilon/2}$.
\end{lemma}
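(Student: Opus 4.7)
The plan is to reduce the integrated quantity to a sum of cosine integrals via an approximate functional equation for $\zeta^2$, then apply Lemma~1 termwise, and finally estimate the AFE remainder. Starting from the symmetric Hardy-Littlewood AFE for $\zeta^2(1/2+it)$ with splitting parameter $P=t/(2\pi)$, multiplying by $e^{2i\vartheta(t)}$, and using $Z^2(t)=e^{2i\vartheta(t)}\zeta^2(1/2+it)$ together with $\chi^2(1/2+it)=e^{-4i\vartheta(t)}$, one obtains the representation
\begin{equation*}
Z^2(t)=2\sum_{n<P}\frac{d(n)}{\sqrt{n}}\cos(2\vartheta(t)-t\ln n)+\mathcal{R}(t),
\end{equation*}
in which $\mathcal{R}(t)$ denotes the AFE remainder.

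Next, I would integrate this identity over $[T,T+U]$ and invoke Lemma~1 on each of the inner cosine integrals. By the very definition (1.1), the principal parts assemble \emph{exactly} into $U\cdot S(T,U)$. The Lemma~1 remainder $\mathcal{O}((U+U^3)/T)$ per term, when summed against the elementary bound $\sum_{n<P}d(n)/\sqrt{n}\ll\sqrt{P}\log P$, contributes at most $\mathcal{O}\bigl((U+U^3)T^{-1/2}\log T\bigr)$ to $\int_T^{T+U}Z^2(t)\,dt$. Dividing by $U$ and inserting the hypothesis $U\leq T^{1/6-\epsilon/2}$, this error is $\mathcal{O}(T^{-1/6-\epsilon}\log T)=o(T^{-1/6})$.

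The only remaining task is to prove $U^{-1}\int_T^{T+U}\mathcal{R}(t)\,dt=\mathcal{O}(T^{-1/6})$. A direct pointwise bound on $\mathcal{R}(t)$ is too crude (the symmetric AFE only yields a logarithmic pointwise bound), so one must extract cancellation by integration. The natural route is Cauchy-Schwarz on $[T,T+U]$,
\begin{equation*}
\left|\int_T^{T+U}\mathcal{R}(t)\,dt\right|\leq U^{1/2}\left(\int_T^{T+U}|\mathcal{R}(t)|^2\,dt\right)^{1/2},
\end{equation*}
combined with a short-interval mean-square estimate for the AFE remainder on the critical line. This is the main obstacle of the proof: producing a sharp enough $L^2$-bound on $\mathcal{R}(t)$ to bring the averaged remainder down to $\mathcal{O}(T^{-1/6})$ in precisely the narrow window $U\leq T^{1/6-\epsilon/2}$ dictated by the Weyl exponent. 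Everything else, namely the application of Lemma~1, the summation of its error term, and the algebraic identification with $S(T,U)$, is essentially bookkeeping. Once the required mean-square bound on $\mathcal{R}(t)$ is in place, the pieces assemble directly into (2.4).
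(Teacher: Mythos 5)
Your outer skeleton agrees with the paper: write $Z^2(t)$ as the divisor-sum of cosines plus a remainder, integrate over $[T,T+U]$, apply Lemma~1 termwise, and sum its error $\mathcal{O}((U+U^3)/T)$ against $\sum_{n<P}d(n)/\sqrt{n}\ll T^{1/2+\epsilon}$, which after division by $U$ is indeed absorbed into $\mathcal{O}(T^{-1/6})$ under $U\le T^{1/6-\epsilon/2}$. But the step you yourself flag as ``the main obstacle'' --- showing $U^{-1}\int_T^{T+U}\mathcal{R}(t)\,\mathrm{d}t=\mathcal{O}(T^{-1/6})$ --- is left entirely unproved: you propose Cauchy--Schwarz plus a short-interval mean-square bound for the AFE remainder on an interval of length $U\le T^{1/6-\epsilon/2}$, but you neither prove nor cite such a bound, and localized $L^2$ estimates for that remainder in so short a window are not off-the-shelf facts. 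As it stands this is a genuine gap, not bookkeeping.

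The paper closes exactly this point by quoting a stronger known input rather than extracting cancellation from integration: it uses the Hardy--Littlewood formula in the refined form with Motohashi's error term (Ivi\'c, p.~125), namely $Z^2(t)=2\sum_{n\le t/2\pi}d(n)n^{-1/2}\cos\{2\vartheta(t)-t\ln n\}+\mathcal{O}(t^{-1/6})$, so the remainder is bounded \emph{pointwise} by $t^{-1/6}$ and integrates trivially to $\mathcal{O}(U T^{-1/6})$; your claim that only a logarithmic pointwise bound is available applies to the crude symmetric AFE, not to this refined version, and once the right formula is quoted no mean-square machinery is needed. A second, smaller omission: the known formula sums over $n\le t/(2\pi)$, a $t$-dependent range, whereas $S(T,U)$ has the fixed cutoff $P=T/(2\pi)$; the paper converts one into the other at a pointwise cost $\mathcal{O}(U T^{\epsilon-1/2})$ via $\sum_{T/2\pi\le n\le (T+U)/2\pi}d(n)/\sqrt{n}\ll U T^{\epsilon-1/2}$, while your write-up identifies the two cutoffs silently, so the principal parts do not assemble ``exactly'' into $U\,S(T,U)$ without this adjustment.
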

\begin{proof}
Let us remind the Hardy-Littlewood formula (see \cite{12}, p. 80)
\be \label{2.5}
Z^2(t)=2\sum_{n\leq \frac{t}{2\pi}}\frac{d(n)}{\sqrt{n}}\cos\{ 2\vth(t)-t\ln n\}+\mcal{O}(t^{-1/6})
\ee
with the Motohashi error term (see \cite{3}, p. 125). Since
\bdis
\sum_{\frac{T}{2\pi}\leq n\leq \frac{T+U}{2\pi}}\frac{d(n)}{\sqrt{n}}=\mcal{O}\left( UT^\epsilon\frac{1}{\sqrt{T}}\right)=
\mcal{O}\left(\frac{U}{T^{1/2-\epsilon}}\right) ,
\edis
then
\begin{eqnarray*}
& &
Z^2(t)=2\sum_{n< P}\frac{d(n)}{\sqrt{n}}\cos\{ 2\vth(t)-t\ln n\}+ \\
& &
\mcal{O}\left(\frac{1}{T^{1/6}}\right)+\mcal{O}\left(\frac{U}{T^{1/2-\epsilon}}\right) ,
\end{eqnarray*}
and
\begin{eqnarray} \label{2.6}
& &
\int_T^{T+U} Z^2(t){\rm d}t=2\sum_{n< P}\frac{d(n)}{\sqrt{n}}\int_T^{T+U}\cos\{ 2\vth(t)-t\ln n\}{\rm d}t+ \\
& &
+\mcal{O}\left(\frac{U}{T^{1/6}}\right)+\mcal{O}\left(\frac{U^2}{T^{1/2-\epsilon}}\right) . \nonumber
\end{eqnarray}
Since
\bdis
\frac{U+U^3}{T}\sum_{n<P}\frac{d(n)}{\sqrt{n}}=\mcal{O}\left(\frac{U+U^3}{T}T^\epsilon\sqrt{T}\right)=
\mcal{O}\left( \frac{U+U^3}{T^{1/2-\epsilon}}\right),
\edis
then we obtain (2.4) by (2.1), (2.6).
\end{proof}

\section{Proofs of the Theorems}

\subsection{Proof of Theorem 1}

Let us remind that for every sufficiently big $T_0$ there is a continuum of pairs
\be \label{3.1}
[\tT,\tU]:\ \tT\in [T_0,T_0+U_0],\ \tU\in\left(\left. 0,T^{1/6-\epsilon/2}\right.\right]
\ee
for which the formula
\be \label{3.2}
\frac{1}{\tU}\int_{\tT}^{\tT+\tU}Z^2(t){\rm d}t=\ln\tT\left\{ 1+\mcal{O}\left(\frac{\ln\ln\tT}{\ln \tT}\right)\right\}
\ee
is true (see \cite{6}, Corollary 2, Remark 4). Then we obtain
\be \label{3.3}
1+\mcal{O}\left(\frac{\ln\ln\tT}{\ln \tT}\right)=\frac{S(\tT,\tU)}{\ln\tT}+\mcal{O}\left(\frac{1}{\tT^{1/6}\ln\tT}\right),\ T_0\to\infty
\ee
by (2.4), (3.2). Finally, we obtain the assertion by (3.1), (3.3).

\subsection{Proof of Theorem 2}

First of all the formula
\begin{eqnarray} \label{3.4}
& &
\int_\gamma^{\gamma+U(\gamma,\alpha)}Z^2(t){\rm d}t=\tau U\ln\gamma\left\{ 1+\mcal{O}\left(\frac{\ln\ln\gamma}{\ln\gamma}\right)\right\}, \\
& &
\tau=\tan[\alpha(\gamma,U)]\in [\eta,1-\eta],\ U(\gamma)=\gamma^{1/3+2\epsilon}+\Delta(\gamma)<1.1\gamma^{1/3+2\epsilon} \nonumber
\end{eqnarray}
for rotating chord is true ((3.4) is the dual asymptotic formula which corresponds to \cite{5}, (4.4) by \cite{6}, (1.2)). It is clear that for
every fixed direction of the rotating chord (i.e. for the fixed value $\tau\in [\eta,1-\eta]$) a continuum of parallel chords corresponds. From this set
we choose a continual subset such that the condition
\be \label{3.5}
\tU<\gamma^{1/6-\epsilon/2}
\ee
(compare with \cite{6}, Remark 4) is fulfilled. For this continuum set the formula
\be \label{3.6}
\frac{1}{\tU}\int_{\tT}^{\tT+\tU}Z^2(t){\rm d}t=\tau\ln\tT\left\{ 1+\mcal{O}\left(\frac{\ln\ln\tT}{\ln\tT}\right)\right\}
\ee
is true (see (3.4)), where
\be \label{3.7}
\gamma<\tT<\gamma+1.1U_0(\gamma);\ U_0(\gamma)=\gamma^{1/3+2\epsilon} .
\ee
Next, from (3.6) by (2.4) we obtain
\be\label{3.8}
\tau=\frac{S(\tT,\tU)}{\ln \tT}+\mcal{O}\left(\frac{\ln\ln\tT}{\ln\tT}\right),\ \gamma\to\infty .
\ee
Finally, we obtain, by (3.5), (3.7), (3.8) the assertion.

\section{Discussion on necessity of a new theory for short and microscopic parts of the Hardy-Littlewood integral}

Let us remind the results of Balasubramanian, Heath-Brown and Ivic for the Hardy-Littlewood integral
\bdis
\int_0^T Z^2(t){\rm d}t
\edis
and for the parts of this integral.

\subsection{}

First of all the Balasubramanian formula
\be \label{4.1}
\int_0^T Z^2(t){\rm d}t=T\ln T+(2c-1-\ln 2\pi)T+R(T),\ R(T)=\mcal{O}(T^{1/3+\epsilon})
\ee
is true (see \cite{1}).

\begin{remark}
The Good's $\Omega$-theorem (see \cite{2}) implies for the Balasubramanian's formula (4.1) that
\be \label{4.2}
\limsup_{T\to\infty}|R(T)|=+\infty ,
\ee
i.e. the error term in (4.1) is unbounded at $T\to\infty$.
\end{remark}

For the short interval the Balasubramanian's formula implies
\be \label{4.3}
\int_T^{T+U_0}Z^2(t){\rm d}t=U_0\ln T+(2c-\ln 2\pi)U_0+\mcal{O}(T^{1/3+\epsilon}),\ U_0=T^{1/3+2\epsilon} .
\ee

\subsection{}

Furthermore, let us remind the Heath-Brown's estimate (see \cite{3}, (7.20), p. 178)
\begin{eqnarray} \label{4.4}
& &
\int_{T-G}^{T+G}Z^2(t){\rm d}t=\mcal{O}\left\{ G\ln T+G\sum_K (TK)^{-\frac{1}{4}}\left( |S(K)|+\right.\right. \\
& &
\left.\left. +K^{-1}\int_0^K|S(x)|{\rm d}x\right)e^{-\frac{G^2K}{T}}\right\} \nonumber
\end{eqnarray}
(for definition of used symbols see \cite{3}, (7.21)-(7.23)), uniformly for $T^\epsilon\leq G\leq T{1/2-\epsilon}$. And, finally, we add the Ivic'
estimate (\cite{3}, (7.26))
\be \label{4.5}
\int_{T-G}^{T+G}Z^2(t){\rm d}t=\mcal{O}(G\ln^2 T),\ G\geq T^{1/3-\epsilon_0},\ \epsilon_0=\frac{1}{108}\approx 0.009 .
\ee

\begin{remark}
It is quite evident that the intervals $[T-G,T+G], G\in (0,1)$, for example, cannot be reached in theories leading to the formula (4.3) of
Balasubramanian or to the estimates (4.4) and (4.5) of Heath-Brown and Ivic, respectively.
\end{remark}

\subsection{}

In this situation I developed the new theory based on geometric properties of the Jacob's ladders. Let us remind the basic formulae of our theory. \\

Titchmarsh-Kober-Atkinson (TKA) formula (see \cite{12}, p. 141)
\be \label{4.6}
\int_0^\infty Z^2(t)e^{-2\delta t}{\rm d}t=\frac{c-\ln(4\pi\delta)}{2\sin\delta}+\sum_{n=0}^N c_n\delta^n+\mcal{O}(\delta^{N+1})
\ee
remained as an isolated result for the period of 56 years. We have discovered (see \cite{4}) the nonlinear integral equation
\be \label{4.7}
\int_0^{\mu[x(T)]}Z^2(t)e^{-\frac{2}{x(T)}t}{\rm d}t=\int_0^T Z^2(t){\rm d}t ,
\ee
in which the essence of the TKA formula is encoded. Namely, we have shown in \cite{4} that the following almost-exact formula for the Hardy-Littlewood
integral (after the period of 90 years)
\be \label{4.8}
\int_0^T Z^2(t){\rm d}t=\frac{\vp(T)}{2}\ln\frac{\vp(T)}{2}+(c-\ln 2\pi)\frac{\vp(T)}{2}+c_0+\mcal{O}\left(\frac{\ln T}{T}\right)
\ee
takes place, where $\vp(T)$ is the Jacob's ladder, i.e. an arbitrary solution to the nonlinear integral equation (4.7).

\begin{remark}
In the case of our result (4.8) the error term tends to zero as $T$ goes to infinity, namely
\bdis
\lim_{T\to\infty} r(T)=0,\ r(T)=\mcal{O}\left(\frac{\ln T}{T}\right) ,
\edis
(compare with (4.2)).
\end{remark}

\subsection{}

In the papers \cite{4},\cite{5} I obtained the following additive formula
\be \label{4.9}
\int_T^{T+U}Z^2(t){\rm d}t=U\ln\left( \frac{\vp(T)}{2}e^{-a}\right)\tan[\alpha(T,U)]+\mcal{O}\left(\frac{1}{T^{1/3-4\epsilon}}\right)
\ee
that holds true for short parts of the Hardy-Littlewood integral. Next, in the paper \cite{6} I proved the multiplicative asymptotic formula
($\mu[\vp]=7\vp\ln\vp$)
\be\label{4.10}
\int_T^{T+U}Z^2(t){\rm d}t=U\ln T\tan[\alpha(T,U)]\left\{ 1+\mcal{O}\left(\frac{\ln\ln T}{\ln T}\right)\right\},\
U\in\left(\left. 0,\frac{T}{\ln T}\right]\right.
\ee
for short and microscopic parts of the Hardy-Littlewood integral.

\begin{remark}
The formulae (4.7)-(4.10) - and all corollaries from these (see \cite{5}, \cite{6}) - cannot e derived within complicated methods of
Balasubramanian, Heath-Brown and Ivic. This proves the necessity of a new method - which is based on elementary geometric properties of
Jacob's ladders - to study short and microscopic parts  of the Hardy-Littlewood integral.
\end{remark}

\thanks{I would like to thank Michal Demetrian for helping me with the electronic version of this work.}

\end{document}